\newcommand{\NN}{\mathbb{N}}
\newcommand{\mcN}{\mathcal{N}}
\newcommand{\mcP}{\mathcal{P}}
\newcommand{\msG}{\mathscr{G}}
\newcommand{\msP}{\mathscr{P}}
\newcommand{\Mem}{\textsc{Mem}}
\newcommand{\Memplus}{\textsc{Mem}$^+$}
\newcommand{\Memo}{\textsc{Mem}$^0$}
\theoremstyle{plain}
\newtheorem{thm}{Theorem}
\newtheorem{lemma}[thm]{Lemma}
\newtheorem{prop}[thm]{Proposition}
\newtheorem*{claim}{Claim}
\theoremstyle{definition}
\newtheorem{defn}[thm]{Definition}
\theoremstyle{remark}
\newtheorem*{ex}{Example}
\numberwithin{equation}{section}
\numberwithin{thm}{section}
\newcommand{\PreserveBackslash}[1]{\let\temp=\\#1\let\\=\temp}
\newcolumntype{C}{>{\PreserveBackslash\centering}p{2ex}}
\begin{document}

\begin{abstract}
Memgames are heap games in which the play constraints on a given heap $H$ are determined by the immediately preceding move on~$H$. We analyze  three related memgames, which we call \Mem, \Memplus, and \Memo, that have simple, parameterless definitions but that nonetheless exhibit intricate and surprising nim value structures. The paper concludes with a long list of open questions and intriguing directions for further research.
\end{abstract}

\title{Memgames}
\author{Urban Larsson}
\address{Urban Larsson, National University of Singapore, Singapore}
\email{urban031@gmail.com}
\author{Simon Rubinstein-Salzedo}
\address{Simon Rubinstein-Salzedo, Euler Circle, Mountain View, CA 94040, USA}
\email{simon@eulercircle.com}
\author{Aaron N.\ Siegel}
\address{Aaron N.\ Siegel, San Francisco, CA, USA}
\email{aaron.n.siegel@gmail.com}
\date{\today}
\maketitle

\section{Introduction}

Consider the following impartial combinatorial game, played on a single heap of tokens. On her first turn, the first player may remove any positive number of tokens (but at most the full heap). On subsequent turns, if $k$ tokens were removed on the immediately preceding turn, then the next player must remove {\it at least} $k$ tokens. If fewer than $k$ tokens remain, then the game ends. The tokens removed on each succeeding turn are therefore constrained to form a monotonically nondecreasing sequence. The winner is determined according to the usual normal play convention: the first player unable to move loses. Played on a single (nonempty) heap, of course, the game is trivial; the first player can win simply by removing the entire heap. 

Now consider the same game played on multiple heaps, with each heap $H$ maintaining its own independent ``memory'' of the number of tokens removed on the immediately preceding play on~$H$. Denote by $n_k$ a heap of size $n$ with memory~$k$; then a legal move is to remove some number $j$ of tokens from $n_k$, with $k \leq j \leq n$, leaving the position $(n-j)_j$. We denote by $n_0$ a starting-position heap (from which no tokens have ever been removed). Then succinctly,
\[
n_k = \{(n-j)_j : k \leq j \leq n\}.
\]

\begin{figure}
\centering
\begin{tabular}{ccc}
    \includegraphics[width=0.3\textwidth]{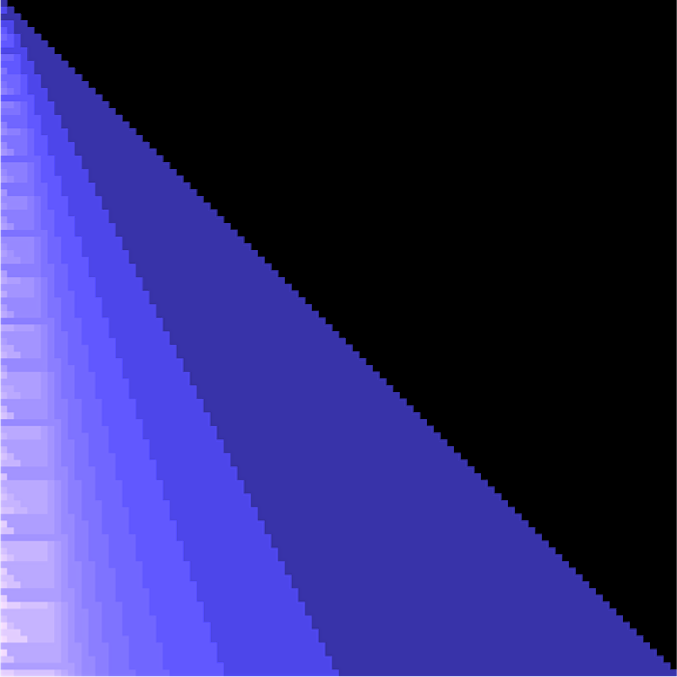} &
    \includegraphics[width=0.3\textwidth]{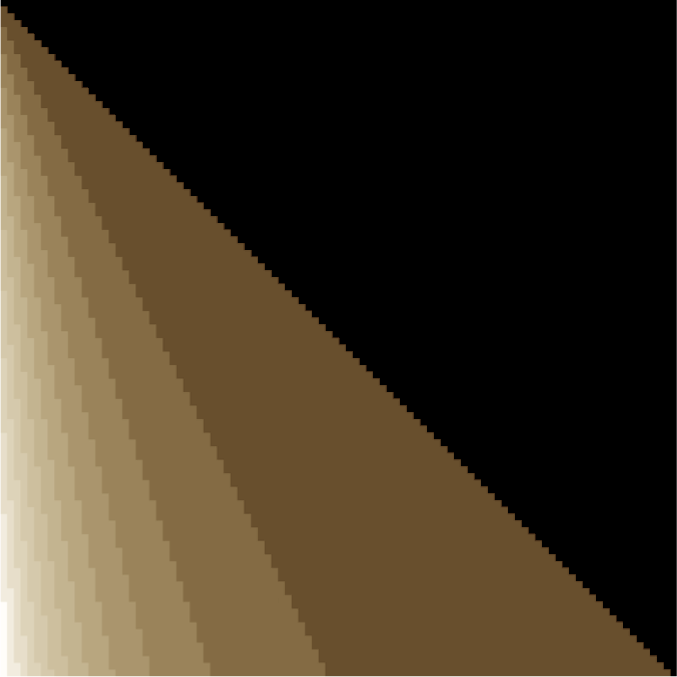} &
    \includegraphics[width=0.3\textwidth]{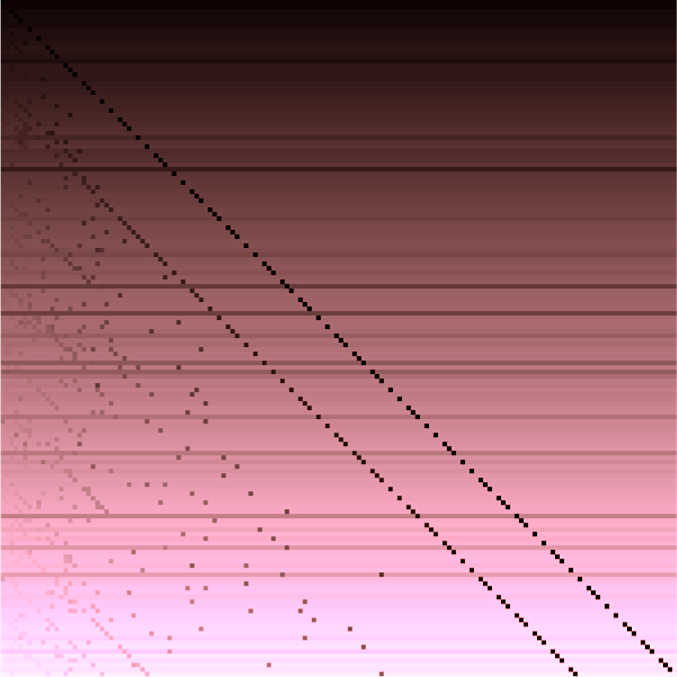} \\
    (a) \Mem & (b) \Memplus & (c) \Memo
\end{tabular}
\caption{``Heat maps'' of the nim values of \Mem, \Memplus, and \Memo. \label{fig:threeheatmaps}}
\end{figure}

We call this game \Mem, and despite its simple parameterless definition, it turns out to have an unexpectedly rich structure. See Figure~\ref{fig:threeheatmaps}(a) for a ``heat map'' of the nim values of \Mem. The nim value of $n_k$ appears at row~$n$, column~$k$ of the diagram, with lower nim values represented by darker shades, so that nim value $0$ is solid black. A striking quadratic structure is evident at first glance, and in fact we will prove shortly (Theorem~\ref{thm:memk2}) that the nim values satisfy
\[
\msG(n_k)=\left\lfloor\frac{n}{k}\right\rfloor \quad\text{whenever}\quad k^2 \ge n.
\]
When $k^2 < n$, however, the nim values of \Mem{} scatter into a mysterious fractal-like pattern, visible in a narrow vertical band along the left side of Figure~\ref{fig:threeheatmaps}(a). There is evidently an intriguing fine structure, which we will discuss briefly in Section \ref{section:openquestions}; but the region beneath the parabolic ``envelope'' remains poorly understood.

We will also consider two closely related games, whose heat maps are also pictured in Figure~\ref{fig:threeheatmaps}:
\begin{itemize}
    \item \Memplus: Remove $j$ tokens from a heap~$H$, where $j$ is {\it strictly greater} than the number removed on the immediately previous play on~$H$:
    \[
    n_k = \{(n-j)_j : k < j \leq n\}
    \]
    \item \Memo: Remove $j$ tokens from a heap~$H$, where $j$ is {\it not equal to} the number removed on the immediately previous play on~$H$:
    \[
    n_k = \{(n-j)_j : 1 \leq j \leq n,\ j \neq k\}
    \]
\end{itemize}

Intriguingly, the nim values of \Memplus{} are a sort of simplification of those of \Mem, with a similar quadratic structure, but without the added fractal-like complexity. In Section \ref{section:memplus} we will give a complete solution to \Memplus.

Perhaps most interesting of all is \Memo. It turns out that for any particular heap size~$n$, its nim values $\msG(n_k)$ are constant for all $k \geq n+1$. We denote this limiting value by $\msG(n_\infty)$, the \textbf{frontier value} at~$n$. The frontier values can be visualized as solid horizontal stripes above the lower triangular region in Figure~\ref{fig:threeheatmaps}(c). We will show that:
\begin{enumerate}
    \item[(i)] Every integer $m \geq 0$ is the frontier value of \emph{at least one} heap size~$n$ (that is, $m = \msG(n_\infty)$ for at least one~$n$).
    \item[(ii)] If $m$ is the frontier value of \emph{at least two} distinct heap sizes, then $m = \msG(n_k)$ for just finitely many values of~$n$ (the \textbf{Mortality Theorem}, Theorem~\ref{thm:mortality}).
    \item[(iii)] Conversely, suppose that $m$ is the frontier value of \emph{exactly one} heap size~$t$, so that $m = \msG(t_\infty)$. Then there are infinitely many $n$ for which some $k$ satisfies $m = \msG(n_k)$, and the positions of nim value $m$ concentrate on the diagonal $(t+k)_k$.
\end{enumerate}
The first two instances of condition (iii) occur at $m = 0$ and $m = 12$, giving rise to the two visible diagonals in Figure~\ref{fig:threeheatmaps}(c). The positions of nim value $0$ are exactly those of the form $k_k$ for which the dyadic valuation of $k$ is even. The positions of nim value $12$ concentrate on the diagonal $(22+k)_k$, and they too are closely related to the dyadic valuation of~$k$. Two further diagonals are known, at $m = 1270$ and $m = 105161$, and they are discussed in more detail in Section~\ref{section:memo_nim_values}.

The behavior of \Memo{} is deeply mysterious. It has a simple definition with no parameters: what, then, is special about $m = 12$, at which a second diagonal suddenly appears? Are there infinitely many such diagonals, and if so, can one characterize their nim values? These and other open questions are discussed in Section~\ref{section:openquestions}.

\subsection*{Generalizations and Related Games.}

\Mem, \Memplus, and \Memo{} are instances of a broader class of \textbf{memgames}. Given any function $F: \NN \to \mcP(\NN^+)$, where $\mcP(\NN^+)$ denotes the powerset of the (strictly) positive integers, we define a ruleset $\Gamma = \Mem(F)$ played with heaps of tokens, as follows. A single heap of $\Mem(F)$ has the form~$n_k$, with options given by 
\[
n_k = \{(n-j)_j : 1 \leq j \leq n,\ j \in F(k)\}.
\]
That is, if $k$ tokens were removed on the immediately preceding turn, then the next player may remove $j$ tokens if and only if $j \in F(k)$. An ``untouched'' heap of size~$n$, from which no tokens have yet been removed, is represented by~$n_0$, with permitted moves given by $F(0)$. When $\Mem(F)$ is played on multiple heaps, then each heap has its own independent ``memory''. We call $F$ the \textbf{memfunction} of~$\Gamma$. For example, the memfunction of \Memplus{} is given by $F(k) = \{j \in \NN^+: j > k\}$. 

%Let $\mcP(\NN)$ denote the power set of the positive integers. For each function $F:\NN\to\mcP(\NN)$, we define an impartial 2-player game, with alternating turns, known as a {\it memgame}, as follows. The game is played with a single pile of stones, initially containing $n$ stones. On the first move, the first player may remove any positive number of stones. On future moves, if the previous player removed $k$ stones, then the next player may remove $m$ stones if and only if $m\in F(k)$. (Of course, one may not remove more stones than are in the pile.) We study normal play: a player who cannot move loses. A position with $n$ stones, where the last move was to remove $k$, is represented by $n_k$. We use the term memgame, for the position re{MEM}bers a small part of its history, and we call the function $F$ the \emph{memfunction}.

Several classical rulesets can be characterized as memgames. For instance, if $F(k)=\{1,2,\ldots,2k\}$, then we recover the game of \textsc{Fibonacci Nim} \cite{Whinihan63}, with the exception that in \textsc{Fibonacci Nim}, the first player may not remove all the tokens. More generally, if, for some $\alpha\ge 1$, we have $F(k)=\{1,2,\ldots,\lfloor\alpha k\rfloor\}$, then we recover a class of take-away games studied, for instance, in~\cite{Schwenk70}, \cite{LRS15}, and~\cite{RS18}. A more general class of memgames was studied in~\cite{HHR03}. In all these papers, the memfunction $F$ has the form $F(k)=\{1,2,\ldots,g(k)\}$ for some $g(k)$.

\Mem{} itself is derived from a closely related game \textsc{Mnem} proposed by Conway~\cite{Conway08}, which is discussed below in Section \ref{section:openquestions}. \Memo{} has also been considered previously: it appears as \#22 in Guy and Nowakowski's 2002 list of unsolved problems~\cite{GN02}, where it is called \textsc{Short Local Nim}. The sequence of frontier values appears in the Online Encyclopedia of Integer Sequences as A131469~\cite{SloaneA131469}.
%(Curiously, on OEIS, \Memo\ is referred to as \textsc{Short Global Nim}.)

A substantial amount of work has been done on other games closely related to \Memo. For instance, Chapter 15 in Volume 3 of \emph{Winning Ways}~\cite{BCG03} contains a discussion of the game \textsc{D.U.D.E.N.E.Y.}\footnote{\textsc{D.U.D.E.N.E.Y.} is a rather contrived acronym for \textsc{Deductions Unfalling, Disallowing Echoes, Not Exceeding Y}.} For a fixed value of $Y$, moves in \textsc{D.U.D.E.N.E.Y.} are the same as those of \Memo, except that no more than $Y$ stones may ever be removed on a single turn. The discussion in~\cite{BCG03} refers back to earlier work by Schuh, who discusses the game in~\cite[Chapter XII, \S217--224]{Schuh68} and describes winning strategies when $Y=3,5,7,9$ (the case where $Y$ is even is trivial, since the usual strategy for subtraction games still works).

%In this article, we study the nim values \cite{Grundy39,Sprague35} of certain memgames. We find that these nim values enjoy a rich structure, and we hope that this will be an avenue for much future work. The nim values for \textsc{Fibonacci nim} have already been investigated by the first two authors in~\cite{LRS15}.

%In this article, if $H$ is an impartial game, we denote its Grundy number by $\msG(H)$. When the ruleset of a memgame (i.e.\ the memfunction $F:\NN\to\mcP(\NN)$) is clear, we will write $\msG(n_k)$. As usual, a losing position (of nim value 0) is called a $\msP$-position.

%\section{\Mem, \Memplus, and \Memo}

%We highlight three specific memgames. A memgame is determined entirely by its memfunction $F:\NN\to\mcP(\NN)$. The game \Mem\ is defined by the memfunction $F(k)=\{k,k+1,\ldots\}$. In other words, in \Mem, one must remove at least as many stones as the previous player did. The game \Memplus\ is defined by the memfunction $F^+(k)=\{k+1,k+2,\ldots\}$. In other words, in \Memplus, one must remove strictly more stones than the previous player did. The game \Memo\ is defined by the memfunction $F^0(k)=\NN\setminus\{k\}$. In other words, in \Memo, one may not remove exactly the same number of stones as the previous player did.

\section{Nim values of \Memplus}
\label{section:memplus}

The simplest of the three games to understand is \Memplus. See Table~\ref{tab:grundymem+} and Figure~\ref{fig:mem5} for the first few nim values.

\begin{table} \resizebox{.8\textwidth}{!}{\begin{tabular}{c||CCCCC|CCCCC|CCCCC|CCCCC} $n\backslash k$ & 1 & 2 & 3 & 4 & 5 & 6 & 7 & 8 & 9 & 10 & 11 & 12 & 13 & 14 & 15 & 16 & 17 & 18 & 19 & 20 \\ \hline 1 & 0 & 0 & 0 & 0 & 0 & 0 & 0 & 0 & 0 & 0 & 0 & 0 & 0 & 0 & 0 & 0 & 0 & 0 & 0 & 0 \\ 2 & 1 & 0 & 0 & 0 & 0 & 0 & 0 & 0 & 0 & 0 & 0 & 0 & 0 & 0 & 0 & 0 & 0 & 0 & 0 & 0 \\ 3 & 1 & 1 & 0 & 0 & 0 & 0 & 0 & 0 & 0 & 0 & 0 & 0 & 0 & 0 & 0 & 0 & 0 & 0 & 0 & 0 \\ 4 & 1 & 1 & 1 & 0 & 0 & 0 & 0 & 0 & 0 & 0 & 0 & 0 & 0 & 0 & 0 & 0 & 0 & 0 & 0 & 0 \\ 5 & 2 & 1 & 1 & 1 & 0 & 0 & 0 & 0 & 0 & 0 & 0 & 0 & 0 & 0 & 0 & 0 & 0 & 0 & 0 & 0 \\ 6 & 2 & 1 & 1 & 1 & 1 & 0 & 0 & 0 & 0 & 0 & 0 & 0 & 0 & 0 & 0 & 0 & 0 & 0 & 0 & 0 \\ 7 & 2 & 2 & 1 & 1 & 1 & 1 & 0 & 0 & 0 & 0 & 0 & 0 & 0 & 0 & 0 & 0 & 0 & 0 & 0 & 0 \\ 8 & 2 & 2 & 1 & 1 & 1 & 1 & 1 & 0 & 0 & 0 & 0 & 0 & 0 & 0 & 0 & 0 & 0 & 0 & 0 & 0 \\ 9 & 3 & 2 & 2 & 1 & 1 & 1 & 1 & 1 & 0 & 0 & 0 & 0 & 0 & 0 & 0 & 0 & 0 & 0 & 0 & 0 \\ 10 & 3 & 2 & 2 & 1 & 1 & 1 & 1 & 1 & 1 & 0 & 0 & 0 & 0 & 0 & 0 & 0 & 0 & 0 & 0 & 0 \\ 11 & 3 & 2 & 2 & 2 & 1 & 1 & 1 & 1 & 1 & 1 & 0 & 0 & 0 & 0 & 0 & 0 & 0 & 0 & 0 & 0 \\ 12 & 3 & 3 & 2 & 2 & 1 & 1 & 1 & 1 & 1 & 1 & 1 & 0 & 0 & 0 & 0 & 0 & 0 & 0 & 0 & 0 \\ 13 & 3 & 3 & 2 & 2 & 2 & 1 & 1 & 1 & 1 & 1 & 1 & 1 & 0 & 0 & 0 & 0 & 0 & 0 & 0 & 0 \\ 14 & 4 & 3 & 2 & 2 & 2 & 1 & 1 & 1 & 1 & 1 & 1 & 1 & 1 & 0 & 0 & 0 & 0 & 0 & 0 & 0 \\ 15 & 4 & 3 & 3 & 2 & 2 & 2 & 1 & 1 & 1 & 1 & 1 & 1 & 1 & 1 & 0 & 0 & 0 & 0 & 0 & 0 \\ 16 & 4 & 3 & 3 & 2 & 2 & 2 & 1 & 1 & 1 & 1 & 1 & 1 & 1 & 1 & 1 & 0 & 0 & 0 & 0 & 0 \\ 17 & 4 & 3 & 3 & 2 & 2 & 2 & 2 & 1 & 1 & 1 & 1 & 1 & 1 & 1 & 1 & 1 & 0 & 0 & 0 & 0 \\ 18 & 4 & 4 & 3 & 3 & 2 & 2 & 2 & 1 & 1 & 1 & 1 & 1 & 1 & 1 & 1 & 1 & 1 & 0 & 0 & 0 \\ 19 & 4 & 4 & 3 & 3 & 2 & 2 & 2 & 2 & 1 & 1 & 1 & 1 & 1 & 1 & 1 & 1 & 1 & 1 & 0 & 0 \\ 20 & 5 & 4 & 3 & 3 & 2 & 2 & 2 & 2 & 1 & 1 & 1 & 1 & 1 & 1 & 1 & 1 & 1 & 1 & 1 & 0 \end{tabular}}

\vspace{2ex}
\caption{Nim values of \Memplus.} \label{tab:grundymem+} \end{table}

\begin{figure}
  \centering{
  \includegraphics[width=0.4\textwidth]{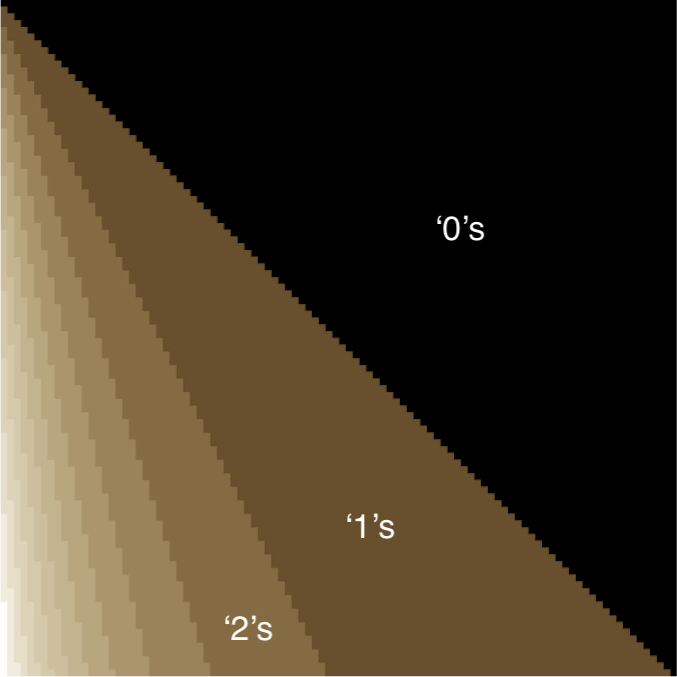}}\caption{The nim values of the game \Memplus ; the columns removal numbers $k$ and the rows heap sizes $n$, with the upper left corner $n_k=1_1$. Note that there is no move from this position, so $\msG(1_1)=0$, expanding into the black region, and the lighter shades symbolize increasing nim values.}\label{fig:mem5}
\end{figure}

\begin{thm} In the game of \Memplus, $\msG(n_k)$ is the largest integer $m$ for which 
\begin{align}\label{eq:Tm}
mk+\frac{m(m+1)}{2}\le n.
\end{align}
\end{thm}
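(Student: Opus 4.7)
The plan is to proceed by induction on $n$ and verify the two axioms defining the Grundy value. Write $T(m,k) := mk + m(m+1)/2$; geometrically, this is the minimum number of stones needed to sustain a sequence of $m$ legal \Memplus-moves starting from a position $\cdot_k$, namely the removals $k+1, k+2, \ldots, k+m$. Let $M$ denote the candidate Grundy value from the statement, so that by maximality $T(M,k) \le n < T(M+1,k)$. The base cases are either $n=1$ (for any $k\ge 1$, no move is legal and $M=0$) or more generally any $(n,k)$ with $n < k+1$ (same reason). For the inductive step, I need to show (a) no legal move from $n_k$ lands in a position of Grundy value $M$; and (b) for each $j \in \{0, 1, \ldots, M-1\}$, some legal move does land in a position of Grundy value $j$.

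Part (a) is the easier half. A legal move removes $m \ge k+1$ stones, reaching $(n-m)_m$, which by induction has Grundy value equal to the largest $\ell$ with $T(\ell,m) \le n-m$. If this value were $M$, then $Mm + M(M+1)/2 \le n-m$, hence $(M+1)m + M(M+1)/2 \le n$. Since $m \ge k+1$, this gives $(M+1)(k+1) + M(M+1)/2 = T(M+1,k) \le n$, contradicting the maximality of $M$.

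Part (b) is the main step and requires exhibiting an explicit move for each $j < M$. The natural choice is $m_j := \lfloor (n - j(j+1)/2)/(j+1) \rfloor$. The defining property of the floor yields
\[
(j+1) m_j + \tfrac{j(j+1)}{2} \,\le\, n \,<\, (j+1)(m_j+1) + \tfrac{j(j+1)}{2} \,=\, (j+1)m_j + \tfrac{(j+1)(j+2)}{2},
\]
which rearranges to $T(j, m_j) \le n - m_j < T(j+1, m_j)$ (for the upper inequality, use $(j+2)m_j \ge (j+1)m_j$). So by induction $\mcG((n - m_j)_{m_j}) = j$, provided the move is legal, i.e., $m_j \ge k+1$.

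The one remaining obstacle — and the real content of the argument — is verifying this legality. Since $k+1$ is an integer, $m_j \ge k+1$ is equivalent to $(n - j(j+1)/2)/(j+1) \ge k+1$, and a direct calculation shows this in turn is equivalent to $n \ge T(j+1, k)$. Now observe that $T(x,k) = x(2k + x + 1)/2$ is monotonically increasing in $x \ge 0$. Since $j + 1 \le M$, monotonicity gives $T(j+1, k) \le T(M, k) \le n$, exactly what is needed. This monotonicity of $T(\cdot, k)$ is the key structural fact that makes the inductive step close.
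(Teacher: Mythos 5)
Your proof is correct, and its overall skeleton matches the paper's: both argue by induction on $n$ and verify the mex property by showing (a) no option attains the candidate value $M$ and (b) every value below $M$ is attained. The difference lies in how step (b) is carried out. The paper organizes the positions into ``fronts'' $F_m(k)=(km+T_m)_k$ and ``sectors'' $\Delta_m(k)$ (the region between consecutive fronts), and argues that the ray of options $(n-k-i)_{k+i}$ sweeps downward through the sectors, landing in $\Delta_j$ precisely as it crosses the $(j+1)$-front; the details of why each sector is hit exactly once are left somewhat terse (``exactly one such pair of positions will be obtained as $i$ ranges in the given interval''). You instead produce an explicit removal amount $m_j=\lfloor (n-j(j+1)/2)/(j+1)\rfloor$ for each target value $j<M$, verify via the floor inequalities that the resulting position has Grundy value $j$, and check legality $m_j\ge k+1$ by reducing it to $n\ge T(j+1,k)$ and invoking monotonicity of $T(\cdot,k)$. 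Your version is more computational but fully explicit and leaves nothing to the reader; the paper's version is essentially the same argument in geometric clothing and has the advantage of explaining the sector structure visible in its Figure 1. Your step (a) is, modulo notation, identical to the paper's observation that removing more than $k$ stones from the $m$-sector always exits to a strictly lower sector. One cosmetic point: you may want to state explicitly that the base of the induction consists of all positions with $n\le k$ (equivalently $M=0$), including $n=0$, so that the option $0_n$ reached by removing the entire pile (your $j=0$ case, where $m_0=n$) is covered by the induction hypothesis.
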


%\simon{Insert picture}

\begin{proof} 
Denote by $T_m$ the $m^{\text{th}}$ triangular number, $T_m = m(m+1)/2$. We define the \emph{$m$-front} to be the set of positions $F_m=\{F_m(k)\}$, where $F_m(k)=(km+T_m)_k$. We define the \emph{$m$-sector} to be the space between the $m$-front (including the $m$-front) and the $(m+1)$-front, i.e.\ $\Delta_m=\bigcup_k\Delta_m(k)$, where \[\Delta_m(k) = \{(km+ T_m)_k,(km+T_m+1)_k,\ldots, (k(m+1)+ T_{m+1}-1)_k\}.\]
For example, the region named `2's in Figure~\ref{fig:mem5} is $\Delta_2$. Observe that $|\Delta_m(k)| = k+m+1$.

We will prove by induction on $n$ that $n_k\in \Delta_j$ if and only if $\msG(n_k)=j$, for all $0\le j<m$; the base case $n=0$ is obvious. 
In order to prove this, it suffices to justify the following claim.

\begin{claim} For $n\in \Delta_m(k)$, then the set $\{\msG((n-k-i)_{k+i})\mid 1\le i\le n-k\}=\{0,\ldots,m-1\}$. \end{claim}

First we prove that, for all $i$, $(n-k-i)_{k+i}\not\in\Delta_m(k+i)$. This follows because \[\min\Delta_m(k+i) = \min\Delta_m(k+1) = \max\Delta_m(k)-k,\] so whenever a player removes more than the column number (here $k+i$) from the $m$-sector, then the resulting position is in an $m'$-sector with $m'<m$. Now we must prove that each such $m'$-sector appears. Firstly, note that $(n-k-1)_{k+1}\in\Delta_{m-1}(k+1)$, whenever $n\in\Delta_m$. Thus, it suffices to show that, for all $0\le j<m-1$, $$\{(n-k-1-i)_{k+1+i}\mid 1\le i <n-k-1\}\cap\Delta_j\ne\varnothing.$$
But this holds, because, for any front position $x_y\in F_{j+1}$, we have $(x-1)_{y+1}\in\Delta_j$, and clearly, for each $j$, exactly one such pair of positions will be obtained as $i$ ranges in the given interval.
\end{proof}

\section{Nim values of \Mem}

The ruleset of \Mem\ is very similar to that of \Memplus. This might lead us to believe that its nim values should be closely related. Indeed, this is true, although there are also some surprises. See Table~\ref{tab:memgrundy} as well as Figure~\ref{fig:memgrundy} for some nim values of \Mem.

\begin{table}
% Note: Originally, the "fractal-like" values in this table were represented using \oldstylenums.
% As an experiment, I've tried switching to delineating them with a jagged line instead. In case we
% want to revert, I've left the \oldstylenums commands in place, and simply redefined it locally
% to be a no-op.
\renewcommand{\oldstylenums}[1]{#1}
\resizebox{.8\textwidth}{!}{\begin{tabular}{c||CCCCC|CCCCC|CCCCC|CCCCC} $n\backslash k$ & 1 & 2 & 3 & 4 & 5 & 6 & 7 & 8 & 9 & 10 & 11 & 12 & 13 & 14 & 15 & 16 & 17 & 18 & 19 & 20 \\ \hline 1 & 1 & 0 & 0 & 0 & 0 & 0 & 0 & 0 & 0 & 0 & 0 & 0 & 0 & 0 & 0 & 0 & 0 & 0 & 0 & 0 \\\cline{2-2} 2 & \multicolumn{1}{c|}{\oldstylenums{2}} & 1 & 0 & 0 & 0 & 0 & 0 & 0 & 0 & 0 & 0 & 0 & 0 & 0 & 0 & 0 & 0 & 0 & 0 & 0 \\ 3 & \multicolumn{1}{c|}{\oldstylenums{1}} & 1 & 1 & 0 & 0 & 0 & 0 & 0 & 0 & 0 & 0 & 0 & 0 & 0 & 0 & 0 & 0 & 0 & 0 & 0 \\ 4 & \multicolumn{1}{c|}{\oldstylenums{2}} & 2 & 1 & 1 & 0 & 0 & 0 & 0 & 0 & 0 & 0 & 0 & 0 & 0 & 0 & 0 & 0 & 0 & 0 & 0 \\ \cline{3-3} 5 & \oldstylenums{3} & \multicolumn{1}{c|}{\oldstylenums{2}} & 1 & 1 & 1 & 0 & 0 & 0 & 0 & 0 & 0 & 0 & 0 & 0 & 0 & 0 & 0 & 0 & 0 & 0 \\  6 & \oldstylenums{4} & \multicolumn{1}{c|}{\oldstylenums{3}} & 2 & 1 & 1 & 1 & 0 & 0 & 0 & 0 & 0 & 0 & 0 & 0 & 0 & 0 & 0 & 0 & 0 & 0 \\  7 & \oldstylenums{3} & \multicolumn{1}{c|}{\oldstylenums{3}} & 2 & 1 & 1 & 1 & 1 & 0 & 0 & 0 & 0 & 0 & 0 & 0 & 0 & 0 & 0 & 0 & 0 & 0 \\  8 & \oldstylenums{2} & \multicolumn{1}{c|}{\oldstylenums{2}} & 2 & 2 & 1 & 1 & 1 & 1 & 0 & 0 & 0 & 0 & 0 & 0 & 0 & 0 & 0 & 0 & 0 & 0 \\  9 & \oldstylenums{4} & \multicolumn{1}{c|}{\oldstylenums{4}} & 3 & 2 & 1 & 1 & 1 & 1 & 1 & 0 & 0 & 0 & 0 & 0 & 0 & 0 & 0 & 0 & 0 & 0 \\ \cline{4-4} 10 & \oldstylenums{3} & \oldstylenums{3} & \multicolumn{1}{c|}{\oldstylenums{3}} & 2 & 2 & 1 & 1 & 1 & 1 & 1 & 0 & 0 & 0 & 0 & 0 & 0 & 0 & 0 & 0 & 0 \\  11 & \oldstylenums{5} & \oldstylenums{3} & \multicolumn{1}{c|}{\oldstylenums{3}} & 2 & 2 & 1 & 1 & 1 & 1 & 1 & 1 & 0 & 0 & 0 & 0 & 0 & 0 & 0 & 0 & 0 \\ 12 & \oldstylenums{4} & \oldstylenums{4} & \multicolumn{1}{c|}{\oldstylenums{4}} & 3 & 2 & 2 & 1 & 1 & 1 & 1 & 1 & 1 & 0 & 0 & 0 & 0 & 0 & 0 & 0 & 0 \\ 13 & \oldstylenums{5} & \oldstylenums{4} & \multicolumn{1}{c|}{\oldstylenums{4}} & 3 & 2 & 2 & 1 & 1 & 1 & 1 & 1 & 1 & 1 & 0 & 0 & 0 & 0 & 0 & 0 & 0 \\ 14 & \oldstylenums{6} & \oldstylenums{5} & \multicolumn{1}{c|}{\oldstylenums{4}} & 3 & 2 & 2 & 2 & 1 & 1 & 1 & 1 & 1 & 1 & 1 & 0 & 0 & 0 & 0 & 0 & 0 \\ 15 & \oldstylenums{3} & \oldstylenums{3} & \multicolumn{1}{c|}{\oldstylenums{3}} & 3 & 3 & 2 & 2 & 1 & 1 & 1 & 1 & 1 & 1 & 1 & 1 & 0 & 0 & 0 & 0 & 0 \\ 16 & \oldstylenums{6} & \oldstylenums{6} & \multicolumn{1}{c|}{\oldstylenums{5}} & 4 & 3 & 2 & 2 & 2 & 1 & 1 & 1 & 1 & 1 & 1 & 1 & 1 & 0 & 0 & 0 & 0 \\ \cline{5-5} 17 & \oldstylenums{5} & \oldstylenums{5} & \oldstylenums{5} & \multicolumn{1}{c|}{\oldstylenums{4}} & 3 & 2 & 2 & 2 & 1 & 1 & 1 & 1 & 1 & 1 & 1 & 1 & 1 & 0 & 0 & 0 \\ 18 & \oldstylenums{4} & \oldstylenums{4} & \oldstylenums{4} & \multicolumn{1}{c|}{\oldstylenums{4}} & 3 & 3 & 2 & 2 & 2 & 1 & 1 & 1 & 1 & 1 & 1 & 1 & 1 & 1 & 0 & 0 \\ 19 & \oldstylenums{6} & \oldstylenums{4} & \oldstylenums{4} & \multicolumn{1}{c|}{\oldstylenums{4}} & 3 & 3 & 2 & 2 & 2 & 1 & 1 & 1 & 1 & 1 & 1 & 1 & 1 & 1 & 1 & 0 \\ 20 & \oldstylenums{7} & \oldstylenums{6} & \oldstylenums{6} & \multicolumn{1}{c|}{\oldstylenums{5}} & 4 & 3 & 2 & 2 & 2 & 2 & 1 & 1 & 1 & 1 & 1 & 1 & 1 & 1 & 1 & 1  \end{tabular}}

\vspace{2ex}
\suppressfloats[t]
\caption{Nim values of \Mem.} \label{tab:memgrundy} \end{table}

% \begin{figure}[h!]
%  \centering{
%  \includegraphics[width=0.25\textwidth]{}}\caption{This picture shows the first few Grundy values of the game \Mem. The upper left corner is position $1_1$, and note that from this position there is a move to the terminal position $0_1$.}
%
%  \end{figure}
  
  \begin{figure}[h]
  \centering
  \includegraphics[width=0.3\textwidth]{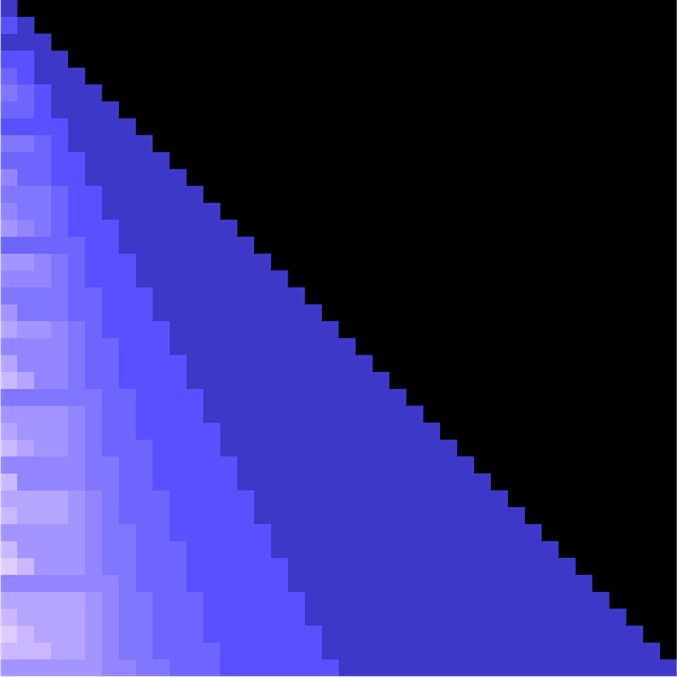}
  \includegraphics[width=0.3\textwidth]{mem3}
  \includegraphics[width=0.3\textwidth]{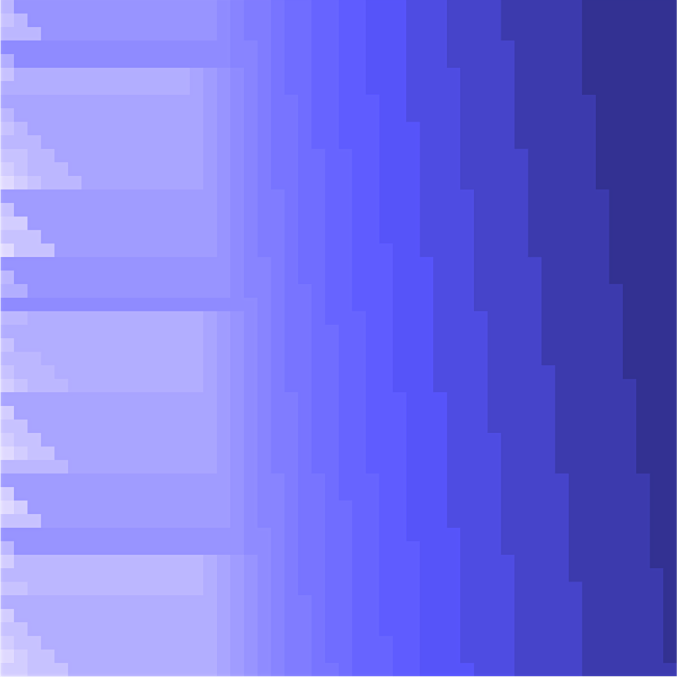}\caption{These pictures show some further nim values of the game \Mem. In the middle picture, on the left, we note the emergence of a parabolic region with high nim values. In the rightmost picture, we have zoomed in on the fractal type behavior inside this region. Each number is a different shade, with lighter shades denoting larger nim values, so black cells are 0, dark blue cells are 1, and so forth.}
  \label{fig:memgrundy}
  \end{figure}

Evidently, there is a lot of structure here. Most of the nim values are indeed very similar to those of \Memplus, but there is a small parabolic region with some more fractal-like behavior. In Table~\ref{tab:memgrundy}, a jagged dividing line on the left hand side of the table delineates the ``regular'' and ``fractal-like'' regions.

\begin{thm}\label{thm:memk2} 
In the game of \Mem, if $k^2\ge n$, then $\msG(n_k)=\lfloor\frac{n}{k}\rfloor$. \end{thm}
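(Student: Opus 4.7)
The plan is to induct on $n$, with the base case $n = 0$ being trivial (no moves available, and $\lfloor 0/k \rfloor = 0$). For the inductive step, fix $n_k$ with $k^2 \ge n$ and set $q = \lfloor n/k \rfloor$. A move from $n_k$ removes $m$ stones for some $m \in [k,n]$, producing the child position $(n-m)_m$. The key observation that lets the induction close is that, for any such $m$, $m^2 \ge km \ge k^2 \ge n > n-m$, so the child satisfies the hypothesis of the theorem with a strictly smaller heap size. By the inductive hypothesis, $\mcG((n-m)_m) = \lfloor (n-m)/m \rfloor = \lfloor n/m \rfloor - 1$, and the problem reduces to verifying
\[
\mathrm{mex}\{\lfloor n/m \rfloor - 1 : k \le m \le n\} = q.
\]

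The upper bound is immediate: since $m \ge k$, monotonicity of $\lfloor \cdot \rfloor$ gives $\lfloor n/m \rfloor \le \lfloor n/k \rfloor = q$, so no option attains value $q$.

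For the lower bound — that every value in $\{0, 1, \ldots, q-1\}$ is realized — I would show that as $m$ runs from $k$ up to $n$, the quantity $\lfloor n/m \rfloor$ decreases from $q$ to $1$ in unit steps, hitting every intermediate integer. The central inequality is
\[
\lfloor n/m \rfloor - \lfloor n/(m+1) \rfloor < \frac{n}{m(m+1)} + 1.
\]
Since $m \ge k$ and $k^2 \ge n$ together yield $m(m+1) > k^2 \ge n$, the right-hand side is strictly less than $2$, and because the left-hand side is a nonnegative integer it must be $0$ or $1$. Combining this with the endpoint computations $\lfloor n/k \rfloor = q$ and $\lfloor n/n \rfloor = 1$ shows that $\lfloor n/m \rfloor$ takes every value in $\{1, 2, \ldots, q\}$ as $m$ varies over $[k,n]$, which is exactly what is needed.

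The principal obstacle is this last density step; the rest is bookkeeping. The degenerate case $q = 0$ (equivalently $n < k$) collapses because there are then no legal moves, so $\mcG(n_k) = 0 = q$. The one place the hypothesis $k^2 \ge n$ does real work is in securing $m(m+1) > n$ for all $m \ge k$, which is the arithmetic heart of the argument and, from the perspective of Table~\ref{tab:memgrundy}, also indicates why the simple formula $\lfloor n/k \rfloor$ must break down outside this region, where the fractal parabolic behavior emerges.
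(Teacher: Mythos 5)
Your proof is correct and takes essentially the same route as the paper: both note that the region $k^2\ge n$ is closed under moves, reduce the claim to computing $\operatorname{mex}\{\lfloor (n-m)/m\rfloor : k\le m\le n\}$, and verify the same two halves (no option attains $q$; every value below $q$ is attained). The only difference is cosmetic: for the attainment step the paper locates, for each target $t<q$, an integer in the interval $\left(\frac{n}{t+2},\frac{n}{t+1}\right]$, whereas you show $\lfloor n/m\rfloor$ descends in unit steps via $m(m+1)>k^2\ge n$ --- an equivalent argument whose key inequality is, if anything, a bit cleaner than the paper's.
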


\begin{proof} First, note that if $k^2\ge n$, then any move, say to $n'_{k'}$, from $n_k$ we have $k'^2\ge n'$. This is clear, because $k'\ge k$ and $n'<n$, so $k'^2\ge k^2\ge n\ge n'$. Next, we must show, if $k^2\ge n$ and $\lfloor\frac{n}{k}\rfloor=q$, then for any $a$ with $k\le a\le n$, $\lfloor\frac{n-a}{a}\rfloor<q$. This is true because \[\frac{n-a}{a}\le\frac{n-a}{k}\le\frac{n-k}{k}\le\frac{n}{k}-1<\left\lfloor\frac{n}{k}\right\rfloor=q.\] Next, we must show that for every $t$ with $0\le t<q$, there is some integer $a$ with $k\le a\le n$ such that $\left\lfloor\frac{n-a}{a}\right\rfloor=t$. Let us temporarily omit the requirement that $a$ be an integer. The value of $a$ making $\frac{n-a}{a}=t$ is $a=\frac{n}{t+1}$, whereas the value of $a$ making $\frac{n-a}{a}=t+1$ is $a=\frac{n}{t+2}$. It thus suffices to show that there is some \emph{integer} $a$ with \[\frac{n}{t+2} < a \le \frac{n}{t+1}.\] Note that $t<q\le\sqrt{n}$, i.e.\ $t\le\lfloor\sqrt{n}\rfloor-1$. We now consider two cases, which together account for all the possibilities for $t$.
\begin{itemize} 
\item \textbf{Case 1: $t\le \frac{\sqrt{4n+1}-3}{2}$.} We have $(t+1)(t+2)\ge n$ whenever $t\le\frac{\sqrt{4n+1}-3}{2}$. Thus in this case, we have \[\frac{n}{t+1}-\frac{n}{t+2}=\frac{n}{(t+1)(t+2)}\ge 1,\] so there must be an integer $a$ such that \[\frac{n}{t+2} < a \le \frac{n}{t+1}.\] 
\item \textbf{Case 2: $\frac{\sqrt{4n+1}-3}{2}\le t=\lfloor\sqrt{n}\rfloor-1$.} This case occurs when $(t+1)^2\le n< (t+1)(t+2)$. Thus we have \[\frac{n}{t+1}=\frac{n}{\lfloor\sqrt{n}\rfloor}\ge\lfloor\sqrt{n}\rfloor\] and \[\frac{n}{t+2}<t+1=\lfloor\sqrt{n}\rfloor.\] Thus we have \[\frac{n}{t+2}<\lfloor\sqrt{n}\rfloor\le\frac{n}{t+1}.\]
\end{itemize} 
%Now, since $t<q\le\sqrt{n}$, we have \[\frac{n}{t+1}-\frac{n}{t+2}=\frac{n}{(t+1)(t+2)}\ge\frac{n}{\sqrt{n}(\sqrt{n}-1)}>1,\] so there is some integer in the range $\left(\frac{n}{t+2},\frac{n}{t+1}\right]$. 
Thus in all cases, there is some integer $n$ such that $\frac{n}{t+2}<a\le\frac{n}{t+1}$. This completes the proof. \end{proof}

\section{$\msP$-positions in \Memo}

The most complex of these three games is \Memo, and it is here that we see the richest structure. See Table~\ref{tab:grundymem0} and Figure~\ref{fig:mem0} for the first few nim values.

\begin{table} \resizebox{.8\textwidth}{!}{\begin{tabular}{c||CCCCC|CCCCC|CCCCC|CCCCC} $n\backslash k$ & 1 & 2 & 3 & 4 & 5 & 6 & 7 & 8 & 9 & 10 & 11 & 12 & 13 & 14 & 15 & 16 & 17 & 18 & 19 & 20 \\ \hline 1 & 0 & 1 & 1 & 1 & 1 & 1 & 1 & 1 & 1 & 1 & 1 & 1 & 1 & 1 & 1 & 1 & 1 & 1 & 1 & 1 \\ 2 & 1 & 1 & 1 & 1 & 1 & 1 & 1 & 1 & 1 & 1 & 1 & 1 & 1 & 1 & 1 & 1 & 1 & 1 & 1 & 1 \\ 3 & 2 & 2 & 0 & 2 & 2 & 2 & 2 & 2 & 2 & 2 & 2 & 2 & 2 & 2 & 2 & 2 & 2 & 2 & 2 & 2 \\ 4 & 2 & 3 & 3 & 0 & 3 & 3 & 3 & 3 & 3 & 3 & 3 & 3 & 3 & 3 & 3 & 3 & 3 & 3 & 3 & 3 \\ 5 & 3 & 3 & 3 & 3 & 0 & 3 & 3 & 3 & 3 & 3 & 3 & 3 & 3 & 3 & 3 & 3 & 3 & 3 & 3 & 3 \\ 6 & 2 & 2 & 2 & 2 & 2 & 2 & 2 & 2 & 2 & 2 & 2 & 2 & 2 & 2 & 2 & 2 & 2 & 2 & 2 & 2 \\ 7 & 4 & 4 & 4 & 4 & 4 & 4 & 0 & 4 & 4 & 4 & 4 & 4 & 4 & 4 & 4 & 4 & 4 & 4 & 4 & 4 \\ 8 & 4 & 5 & 3 & 5 & 5 & 5 & 5 & 5 & 5 & 5 & 5 & 5 & 5 & 5 & 5 & 5 & 5 & 5 & 5 & 5 \\ 9 & 5 & 5 & 5 & 5 & 5 & 5 & 5 & 5 & 0 & 5 & 5 & 5 & 5 & 5 & 5 & 5 & 5 & 5 & 5 & 5 \\ 10 & 6 & 6 & 4 & 6 & 6 & 3 & 6 & 6 & 6 & 6 & 6 & 6 & 6 & 6 & 6 & 6 & 6 & 6 & 6 & 6 \\ 11 & 6 & 5 & 7 & 4 & 7 & 7 & 7 & 7 & 7 & 7 & 0 & 7 & 7 & 7 & 7 & 7 & 7 & 7 & 7 & 7 \\ 12 & 7 & 7 & 7 & 7 & 4 & 7 & 7 & 7 & 7 & 7 & 7 & 0 & 7 & 7 & 7 & 7 & 7 & 7 & 7 & 7 \\ 13 & 6 & 6 & 6 & 6 & 6 & 6 & 6 & 6 & 6 & 6 & 6 & 6 & 0 & 6 & 6 & 6 & 6 & 6 & 6 & 6 \\ 14 & 4 & 4 & 4 & 4 & 4 & 4 & 4 & 4 & 4 & 4 & 4 & 4 & 4 & 4 & 4 & 4 & 4 & 4 & 4 & 4 \\ 15 & 8 & 8 & 7 & 8 & 8 & 8 & 8 & 8 & 8 & 8 & 8 & 8 & 8 & 8 & 0 & 8 & 8 & 8 & 8 & 8 \\ 16 & 8 & 9 & 6 & 9 & 9 & 9 & 9 & 9 & 9 & 9 & 9 & 9 & 9 & 9 & 9 & 0 & 9 & 9 & 9 & 9 \\ 17 & 9 & 9 & 9 & 9 & 9 & 7 & 9 & 9 & 9 & 9 & 9 & 9 & 9 & 9 & 9 & 9 & 0 & 9 & 9 & 9 \\ 18 & 8 & 8 & 8 & 8 & 8 & 8 & 8 & 8 & 8 & 5 & 8 & 8 & 8 & 8 & 8 & 8 & 8 & 8 & 8 & 8 \\ 19 & 10 & 9 & 10 & 10 & 10 & 10 & 10 & 10 & 10 & 10 & 10 & 10 & 10 & 10 & 10 & 10 & 10 & 10 & 0 & 10 \\ 20 & 10 & 11 & 11 & 11 & 11 & 11 & 11 & 11 & 11 & 11 & 11 & 11 & 11 & 11 & 11 & 11 & 11 & 11 & 11 & 0 \end{tabular}}

\vspace{2ex}
\caption{Nim values of \Memo.} \label{tab:grundymem0} \end{table}

  \begin{figure}
  \centering
  \includegraphics[width=0.4\textwidth]{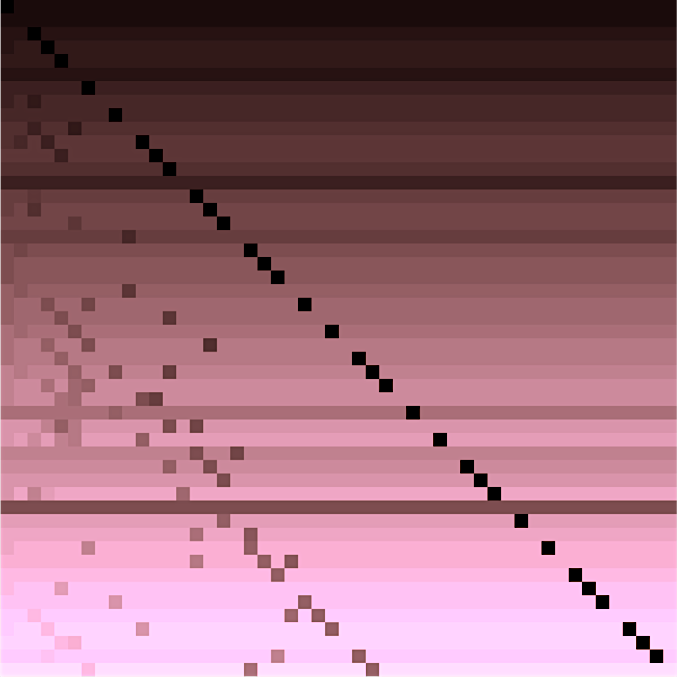}\hspace{2 mm}
  \includegraphics[width=0.4\textwidth]{mem7}\caption{The pictures show the first few nim values of the game \Memo. In the picture to the left, we see in particular the ``0''s on the main diagonal, and in the picture to the right, one can see the emergence of an accompanying ``left-shifted'' diagonal of nim values 12.} \label{fig:mem0}
  \end{figure}

In order to characterize the $\msP$-positions (and higher nim values) of \Memo, we need to introduce the dyadic valuation.

\begin{defn} Let $n$ be a positive integer. We may uniquely write $n=2^em$, where $e$ and $m$ are nonnegative integers and $m$ is odd. We define its \emph{dyadic valuation} to be $v_2(n)=e$. \end{defn}

By convention, we will say that $v_2(0)$ is even, without specifying its value.

\begin{thm} The $\msP$-positions of \Memo\ are of the form $n_n$, where $v_2(n)\equiv 0\pmod{2}$. \end{thm}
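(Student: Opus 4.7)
The plan is to prove, by strong induction on the number of stones $n$, that for $n \ge 1$ the position $n_k$ is a $\mcP$-position if and only if $n = k$ and $v_2(n)$ is even. The terminal positions $0_k$ (with any $k$) serve as the base case: they are automatically $\mcP$-positions since no move is available, and one can package this with the convention $v_2(0) \equiv 0 \pmod 2$. The inductive step splits cleanly into three cases, according to whether $n = k$ and, if so, the parity of $v_2(n)$.

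The easy case is $n \ne k$: removing all $n$ stones is legal precisely because $n \ne k$, and lands in the terminal $\mcP$-position $0_n$, so $n_k$ is $\mcN$. This single observation handles every off-diagonal position and disposes of most of the converse direction. For a diagonal position $n_n$ with $v_2(n)$ odd, $n$ is even and the move $m = n/2$ is legal (since $n/2 \ne n$); it lands in $(n/2)_{n/2}$, where $v_2(n/2) = v_2(n) - 1$ is even, hence a $\mcP$-position by the inductive hypothesis. Thus $n_n$ is $\mcN$ in this case too.

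The crux is $n_n$ with $v_2(n)$ even, where we must check that \emph{every} legal move leads to an $\mcN$-position. The legal moves from $n_n$ remove $1 \le m \le n-1$ stones (the move $m = n$ is forbidden) and land in $(n-m)_m$. If $n - m \ne m$, the target is off the diagonal and hence $\mcN$ by induction. If $n - m = m$, then $m = n/2$; this requires $v_2(n) \ge 2$, in which case $v_2(n/2) = v_2(n) - 1$ is odd, so the target is again $\mcN$ by induction. No legal move reaches a $\mcP$-position, so $n_n$ is $\mcP$. The only real insight is the parity flip $v_2(n/2) = v_2(n) - 1$ under halving, together with the observation that the move ``take everything'' is always available when $n \ne k$; everything else is routine case bookkeeping, and I do not anticipate a genuine obstacle.
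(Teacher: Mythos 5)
Your proof is correct and follows essentially the same route as the paper's: off-diagonal positions are $\mcN$ because the ``take everything'' move is legal, and the diagonal positions are handled by induction using the halving move $n_n \to (n/2)_{n/2}$ and the parity flip $v_2(n/2)=v_2(n)-1$. Your write-up is somewhat more explicit about the case analysis (in particular, checking that the only on-diagonal option from $n_n$ is $(n/2)_{n/2}$), but the underlying argument is identical.
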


\begin{proof}
Consider first a position of the form $n_m$, with $m\ne n$. Then there is a move to $0_0$, so $n_m$ is an $\mcN$ position. Therefore each $\msP$-position must be of the form $n_n$. Note that $v_2(0)\equiv 0\pmod{2}$ by convention and that $0=v_2(1)\equiv 0\pmod{2}$, but $1=v_2(2)\equiv 0\pmod{2}$. Suppose the result holds for all $n'<n$. Now, from $n_n$, with $n>1$ such that $v_2(n)\not\equiv 0\pmod{2}$, clearly $(n/2)_{(n/2)}$ is the desired move option (because $v_2(n/2)\equiv 0\pmod{2}$). On the other hand, if $v_2(n)\equiv 0\pmod{2}$, then either a player must move away from the main diagonal, or leave a position with odd dyadic valuation. \end{proof}

\section{Higher nim values of \Memo}
\label{section:memo_nim_values}

Before computing any higher nim values, we introduce the notion of a \emph{frontier} in \Memo. Note that the positions $n_{n+1},n_{n+2},n_{n+3},\ldots$ all have the same moves available and thus have the same nim values. Thus, we denote each position $n_{n+1},n_{n+2},n_{n+3},\ldots$ by $n_\infty$. We call positions of the form $n_\infty$ \emph{frontier positions}, and we call the nim value $\msG(n_\infty)$ the $n^\text{th}$ \emph{frontier value}.

The first few frontier values are 0,1,1,2,3,3,2,4,5,5,6,7,7,6,4,8,9,9,8,10. We will prove the following results:

\begin{thm} \label{thm:unbounded} The frontier values are unbounded. \end{thm}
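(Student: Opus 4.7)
The plan is to argue the contrapositive: if the frontier sequence $g(n) := \mcG(n_\infty)$ were bounded, then $\NN$ itself would be finite. The engine is a single structural observation about options from $n_\infty$, combined with the self-defeating property of mex --- a value cannot lie in the option set of a position whose Grundy value equals that value.

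First I would record the structural fact. The moves from $n_\infty$ are to $(n-m)_m$ for $1 \le m \le n$. Whenever $m > n - m$, the forbidden size $m$ already exceeds the remaining pile, so the restriction is vacuous and $(n-m)_m$ has exactly the same options as $(n-m)_\infty$; hence $\mcG((n-m)_m) = g(n-m)$. As $j := n - m$ ranges over $\{0, 1, \ldots, \lfloor (n-1)/2 \rfloor\}$ (equivalently, as $m$ ranges over integers strictly greater than $n/2$), this shows that the set of option Grundy values at $n_\infty$ contains $\{g(j) : 0 \le j \le \lfloor (n-1)/2 \rfloor\}$.

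Now assume for contradiction that $g$ is bounded, so that $I := g(\NN)$ is a finite set, and for each $v \in I$ let $n_v := \min g^{-1}(v)$. I would show that every $n \in g^{-1}(v)$ satisfies $n \le 2 n_v$. Indeed, if $n > 2 n_v$ then $n_v < n/2$, so $n_v \le \lfloor (n-1)/2 \rfloor$, and by the previous paragraph the value $v = g(n_v)$ lies in the option Grundy set at $n_\infty$. But $g(n) = v$ is by definition the mex of that set, which cannot contain $v$ --- a contradiction. Each fiber $g^{-1}(v)$ therefore lies in $[n_v, 2 n_v]$ and is finite, and writing $\NN = \bigsqcup_{v \in I} g^{-1}(v)$ as a finite union of finite sets contradicts $|\NN| = \infty$. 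The only subtlety is confirming the inclusion in the second paragraph; after that, the pigeonhole collapse is immediate from the self-exclusion property of mex, so there is no serious obstacle.
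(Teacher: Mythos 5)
Your proposal is correct and is essentially the paper's own argument: your structural observation that $(n-m)_m$ coincides with $(n-m)_\infty$ when $m>n-m$ is exactly the paper's Final Frontier Theorem (a move from $a_\infty$ to $n_\infty$ exists whenever $a>2n$), and your pigeonhole step is the contrapositive phrasing of the paper's ``each value appears only finitely often on the frontier, hence infinitely many values appear.''
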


\begin{thm} \label{thm:frontier} Every integer appears at least once as a frontier value. \end{thm}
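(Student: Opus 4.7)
The plan is to prove a downward-closure property: if $k \ge 1$ occurs as a frontier value, then so does $k-1$. Combined with Theorem~\ref{thm:unbounded} and the fact that $\mcG(0_\infty)=0$, this immediately yields the theorem, since an unbounded, downward-closed subset of $\NN\cup\{0\}$ containing $0$ must be all of $\NN\cup\{0\}$.

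The key technical step is the following lemma: for every position $r_s$ and every $j$ with $\mcG(r_s)=j$, there exists $r' \le r$ such that $\mcG(r'_\infty)=j$. I would prove this by strong induction on $r$. The base case $r<s$ is immediate, since $r_s$ is already a frontier position (by the convention identifying $r_s$ with $r_\infty$ whenever $s>r$), so one may take $r'=r$. In the inductive step $r \ge s$, the central observation is that the moves legal from $r_\infty$ are precisely those legal from $r_s$ together with one extra move, ``remove $s$ stones,'' which is forbidden from $r_s$ but legal from $r_\infty$. Let $T$ denote the Grundy set of moves from $r_s$ (so $\operatorname{mex}(T)=j$), and let $v := \mcG((r-s)_s)$ denote the Grundy value reached by the extra move, so that $\mcG(r_\infty)=\operatorname{mex}(T\cup\{v\})$. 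Now either $v\ne j$, in which case $T\cup\{v\}$ still contains $\{0,1,\ldots,j-1\}$ and still misses $j$, giving $\mcG(r_\infty)=j$ and allowing us to take $r'=r$; or $v=j$, in which case we apply the induction hypothesis to $(r-s)_s$, whose heap size is strictly smaller than $r$.

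Given the lemma, the deduction is short: if $\mcG(n_\infty)=k\ge 1$, then $k-1$ lies in the Grundy set of the moves from $n_\infty$, so $\mcG((n-m)_m)=k-1$ for some $1\le m\le n$, and applying the lemma to $(n-m)_m$ produces some $r'\le n-m<n$ with $\mcG(r'_\infty)=k-1$. The only genuine difficulty is the lemma, and its conceptual content is the observation that $\mcG(r_s)$ and $\mcG(r_\infty)$ are computed from Grundy sets differing by at most one element, namely $v$. This forces the clean dichotomy above — either $v$ already differs from $j$ and we are done, or $v=j$ and we recurse on a strictly smaller heap — so the induction terminates without any further calculation.
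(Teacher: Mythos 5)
Your proof is correct and follows essentially the same route as the paper: it combines the unboundedness of frontier values (Theorem~\ref{thm:unbounded}) with the observation that $r_\infty$ and $r_s$ differ by exactly one option, which is precisely the content of Lemma~\ref{lem:twoposs} and Proposition~\ref{prop:frontier}. Your key lemma is just Proposition~\ref{prop:frontier} reproved by induction on the heap size rather than by taking a minimal occurrence, and your ``downward closure'' packaging of the final deduction is a cosmetic variant of the paper's ``the first appearance of $m$ anywhere establishes itself on the frontier.''
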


\begin{thm} \label{thm:increasing} If $f(m)$ denotes the least $n$ such that $\msG(n_\infty)=m$, then $f(m)<f(m')$ whenever $m<m'$. \end{thm}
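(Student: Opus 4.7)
The plan is to prove Theorem~\ref{thm:increasing} by strong induction on $m$, showing $f(m) < f(m+1)$ for every $m \ge 0$. Two observations drive the argument. First, the only moves from $n_\infty$ are to positions $(n-j)_j$ with $1 \le j \le n$, so
\[\mcG(n_\infty) = \operatorname{mex}\{\mcG((n-j)_j) : 1 \le j \le n\}.\]
Second, whenever $N < j$, the restriction $a \ne j$ on moves from $N_j$ is vacuous (there are not enough stones to remove $j$), so $\mcG(N_j) = \mcG(N_\infty)$. The base case $m = 0$ is immediate: $\mcG(0_\infty) = 0$ gives $f(0) = 0$, while any $n$ with $\mcG(n_\infty) = 1$ must satisfy $n \ge 1$, so $f(1) > 0 = f(0)$.

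For the inductive step with $m \ge 1$, I would assume $f(0) < \cdots < f(m)$, set $n = f(m)$, and suppose for contradiction that some $n' \le n$ satisfies $\mcG(n'_\infty) = m+1$. Since $\mcG(n_\infty) = m$, we have $n' < n$, and by the mex formula there exists $j$ with $\mcG((n'-j)_j) = m$. If $j > n'/2$ (equivalently $n' - j < j$), the second observation yields $\mcG((n'-j)_\infty) = m$, whence $f(m) \le n' - j < n$, contradicting $f(m) = n$.

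The harder case is $j \le n'/2$, where the move removing $j$ stones is valid from $(n'-j)_j$. Writing $S = \{\mcG((n'-j-a)_a) : 1 \le a \le n'-j,\ a \ne j\}$, we have $\mcG((n'-j)_j) = \operatorname{mex}(S) = m$ while $\mcG((n'-j)_\infty) = \operatorname{mex}(S \cup \{\mcG((n'-2j)_j)\})$. Since $\{0, \ldots, m-1\} \subseteq S$, adjoining one element leaves the mex unchanged unless that element equals $m$; so either $\mcG((n'-j)_\infty) = m$ (and we finish as before), or $\mcG((n'-2j)_j) = m$, in which case I would iterate the same dichotomy on $(n'-2j)_j$. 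This produces a descent: for $k = 1, 2, \ldots$, either $\mcG((n'-kj)_\infty) = m$ (contradiction) or $\mcG((n'-(k+1)j)_j) = m$ (continue). Since $n' - kj$ strictly decreases, the descent halts at the first $K$ with $n' - Kj < j$, at which point the second observation forces $\mcG((n'-Kj)_\infty) = m$, yielding $f(m) \le n' - Kj < n$, the desired contradiction.

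The main obstacle is the descent argument in the hard case: one must verify that the single element $\mcG((n'-2j)_j)$ is the only possible source of discrepancy between the two mex values, and that the descent terminates cleanly. The boundary situation $n' - Kj = 0$ would force $m = 0$ via $\mcG(0_j) = 0$, but this is excluded by the inductive hypothesis $m \ge 1$; isolating the base case $m = 0$ at the start is precisely what keeps this boundary from derailing the descent.
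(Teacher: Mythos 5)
Your proof is correct, but it takes a genuinely different route from the paper's. The paper's argument is two lines: by the mex rule, any position of Grundy value $m'$ has an option of value $m<m'$ with strictly fewer stones, so the first heap size at which $m$ occurs (over \emph{all} columns) precedes the first heap size at which $m'$ occurs; Proposition~\ref{prop:frontier} then identifies these first occurrences with the frontier values $f(m)$, $f(m')$. You instead avoid Proposition~\ref{prop:frontier} entirely: from $\mcG(n'_\infty)=m+1$ you extract an option $(n'-j)_j$ of value $m$ and then push the value $m$ down column $j$ by repeatedly comparing $\operatorname{mex}(S)$ with $\operatorname{mex}(S\cup\{\mcG((n'-(k+1)j)_j)\})$ until it lands on a frontier position. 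That descent is really a re-derivation and iteration of Lemma~\ref{lem:twoposs} (the two mex values differ only if the single extra option has value exactly $m$), and your handling of the terminal cases $n'-Kj<j$ and $n'-Kj=0$ is sound. What the paper's approach buys is brevity and modularity: the minimality trick in Proposition~\ref{prop:frontier} collapses your whole descent to a single step. What your approach buys is a slightly stronger structural fact proved along the way --- \emph{any} occurrence of $m$ at a position $N_j$, not just the first occurrence, forces a frontier occurrence of $m$ at some heap size at most $N$ --- at the cost of an induction on $m$ that, apart from isolating the case $m=0$, does no real work.
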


Observe that the positions $n_k$ for $k\le n$ and $n_\infty$ are very similar in structure. They have all the same options, except for one: the position $(n-k)_k$ is an option from $n_\infty$, but not from $n_k$. As a result, we have the following important lemma:

\begin{lemma} \label{lem:twoposs} $\msG(n_k)$ is equal to $\msG(n_\infty)$ or to $\msG((n-k)_k)$. \end{lemma}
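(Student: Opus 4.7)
The plan is to compare the option sets of $n_k$ and $n_\infty$ directly, and then apply a simple fact about the mex operation. From $n_k$, the legal moves are to remove any $m$ with $1 \le m \le n$ and $m \ne k$, producing options $(n-m)_m$. From $n_\infty$ (the common value of $n_j$ for $j > n$), the forbidden removal count $j$ exceeds $n$ and therefore imposes no restriction, so the legal moves are to remove any $m$ with $1 \le m \le n$, producing options $(n-m)_m$. In particular, the options of $n_\infty$ are exactly the options of $n_k$ together with the one extra option $(n-k)_k$.

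Writing $S = \{\mcG((n-m)_m) : 1 \le m \le n,\, m \ne k\}$ and $x = \mcG((n-k)_k)$, we thus have $\mcG(n_k) = \mex(S)$ and $\mcG(n_\infty) = \mex(S \cup \{x\})$. Set $a = \mex(S)$. If $x \ne a$, then $a$ is absent from both $S$ and $\{x\}$, so $a \notin S \cup \{x\}$, while every nonnegative integer smaller than $a$ lies in $S$ and hence in $S \cup \{x\}$; this forces $\mex(S \cup \{x\}) = a$, i.e., $\mcG(n_\infty) = \mcG(n_k)$. If $x = a$, then $\mcG(n_k) = a = x = \mcG((n-k)_k)$. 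In either case, the claimed dichotomy holds.

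There is essentially no obstacle here beyond correctly identifying the option sets; the whole content of the lemma is the elementary observation that $\mex(S \cup \{x\})$ can differ from $\mex(S)$ only when $x$ happens to equal $\mex(S)$, in which case $\mex(S) = x$. The real work — extracting the frontier values and proving Theorems~\ref{thm:unbounded}, \ref{thm:frontier}, and~\ref{thm:increasing} — will presumably be to exploit this dichotomy to compute $\mcG(n_k)$ recursively in terms of smaller frontier values.
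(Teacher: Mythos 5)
Your proof is correct and follows the same route as the paper, which simply observes that the option set of $n_\infty$ is that of $n_k$ together with the single extra option $(n-k)_k$, and then implicitly invokes the mex fact you spell out. You have merely made explicit the elementary step the paper leaves to the reader.
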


\begin{defn} We call $n_k$ an \emph{exceptional position} if $\msG(n_k)=\msG((n-k)_k)$. \end{defn}

As a consequence, we immediately have the following proposition:

\begin{prop} \label{prop:frontier} Suppose $n$ is the smallest integer for which there exists some $k$ with $\msG(n_k)=m$. Then $\msG(n_\infty)=m$. \end{prop}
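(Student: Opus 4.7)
The plan is to deduce this proposition as an almost immediate corollary of Lemma~\ref{lem:twoposs}. That lemma gives a clean dichotomy: for any $n_k$ with $k \le n$, the Grundy value $\mcG(n_k)$ either agrees with the frontier value $\mcG(n_\infty)$, or agrees with $\mcG((n-k)_k)$, where the latter corresponds to the one move --- removing exactly $k$ stones --- which is available from $n_\infty$ but not from $n_k$. My strategy is simply to rule out the second branch using the minimality hypothesis on $n$.

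Concretely, I would begin by fixing a witness $k$ with $\mcG(n_k)=m$, which exists by hypothesis, and then applying Lemma~\ref{lem:twoposs} to get the two possible cases. In the case $\mcG(n_k)=\mcG(n_\infty)$, there is nothing to do: $\mcG(n_\infty)=m$ directly. In the case $\mcG(n_k)=\mcG((n-k)_k)$, we would have a position $(n-k)_k$ with Grundy value $m$ on a heap of size $n-k$; since $k\ge 1$, this gives $n-k<n$, contradicting the minimality of $n$. So the first case must hold, and $\mcG(n_\infty)=m$ as required.

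I do not expect any real obstacle here; the content is essentially a single-line deduction once Lemma~\ref{lem:twoposs} is in hand. The only points worth double-checking are the boundary cases: that $k\ge 1$ for any legitimate previous move (so the strict inequality $n-k<n$ always holds), and the degenerate case $k=n$, where $(n-k)_k=0_k$ has Grundy value $0$ and the same minimality argument applies without modification.
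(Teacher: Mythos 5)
Your proposal is correct and matches the paper's own proof exactly: both apply Lemma~\ref{lem:twoposs} and rule out the case $\mcG(n_k)=\mcG((n-k)_k)$ by the minimality of $n$, since $n-k<n$. Your additional remarks on the boundary cases are fine but not needed.
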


\begin{proof} By Lemma~\ref{lem:twoposs}, either $\msG(n_k)=\msG(n_\infty)$ or $\msG(n_k)=\msG((n-k)_k)$. By minimality of $n$, we exclude the second possibility. \end{proof}

\begin{thm}[The Final Frontier Theorem] \label{thm:finalfrontier} If $\msG(n_\infty)=m$ and $a>2n$, then $\msG(a_\infty)\neq m$. \end{thm}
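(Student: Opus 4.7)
My plan is to use the mex definition of Grundy values together with a simple structural fact about \Memo: whenever $k > n$, the position $n_k$ has exactly the same options as the frontier position $n_\infty$. The reason is that in \Memo\ from $n_k$ one may remove any $i$ with $1 \le i \le n$ and $i \ne k$; but $k > n$ makes the forbidden-move constraint vacuous, so the option set reduces to $\{1,2,\ldots,n\}$, which is precisely the option set of $n_\infty$. This is the degenerate case of Lemma~\ref{lem:twoposs} in which the alternative $\mcG((n-k)_k)$ cannot occur (the move would take the pile to a negative size), so we conclude $\mcG(n_k) = \mcG(n_\infty)$ for every $k > n$.

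With this in hand, I will produce an option of $a_\infty$ whose Grundy value equals $m$, which by the mex rule forces $\mcG(a_\infty) \neq m$. The natural candidate is the move that removes $a - n$ stones, leading to the position $n_{a-n}$; this is a legal move since $1 \le a - n \le a$ (the lower bound uses $a > 2n \ge 0$). Crucially, the hypothesis $a > 2n$ gives $a - n > n$, so the preceding observation applies and $\mcG(n_{a-n}) = \mcG(n_\infty) = m$. Hence $m$ appears among the Grundy values of options of $a_\infty$, proving $\mcG(a_\infty) \neq m$.

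I do not anticipate any serious obstacle. The only thing to pin down is that the bound $a > 2n$ is being used at exactly the right place: it is precisely what yields $a - n > n$, and it is this strict inequality (rather than $\ge$) that allows me to identify $n_{a-n}$ with $n_\infty$. The edge case $n = 0$ should be handled in a single line: then the hypothesis is $a > 0$, and removing all $a$ stones moves $a_\infty$ to the terminal position $0_a$ of Grundy value $0 = \mcG(0_\infty)$, so $\mcG(a_\infty) \ne 0$. Overall, the proof is essentially a one-liner given Lemma~\ref{lem:twoposs}, and requires no induction on $a$ or $n$.
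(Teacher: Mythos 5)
Your proof is correct and follows essentially the same route as the paper: both arguments observe that $a>2n$ makes the option $n_{a-n}$ of $a_\infty$ satisfy $a-n>n$, hence identical to $n_\infty$, so $m$ is an excludant of $a_\infty$. Your write-up merely spells out the identification $n_k=n_\infty$ for $k>n$ and the $n=0$ edge case in more detail than the paper does.
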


\begin{proof} If $a>2n$, then there is a move from $a_\infty$ to $n_{a-n}=n_\infty$. Thus $m=\msG(n_\infty)\neq\msG(a_\infty)$. \end{proof}

Thus $2n$ is the \emph{final} (possible) \emph{frontier} for the nim value $m$.

Theorem~\ref{thm:finalfrontier} allows us to prove Theorem~\ref{thm:unbounded}.

\begin{proof}[Proof of Theorem~\ref{thm:unbounded}] By Theorem~\ref{thm:finalfrontier}, each integer only appears finitely many times on the frontier. Thus, there must be infinitely many (and hence unbounded) numbers on the frontier. \end{proof}

In conjunction with Proposition~\ref{prop:frontier}, we can also establish Theorems \ref{thm:frontier} and~\ref{thm:increasing}:

\begin{proof}[Proof of Theorem~\ref{thm:frontier}] Since the frontier values are unbounded, every integer must appear as some nim value $\msG(n_k)$. By Proposition~\ref{prop:frontier}, the first time $m$ appears as $\msG(n_k)$, it establishes itself on the frontier. Thus every nonnegative integer appears on the frontier. \end{proof}

\begin{proof}[Proof of Theorem~\ref{thm:increasing}] If $m<m'$, then the first instance of $m$ must occur before the first instance of $m'$.  Thus by Proposition~\ref{prop:frontier}, the first frontier value equal to $m$ must be less than the first frontier value equal to $m'$. \end{proof}

What happens to a nim value after the final frontier? We say that a nim value $m$ is \emph{mortal} if $\msG(n_k) = m$ for just finitely many~$n_k$, and otherwise $m$ is \emph {immortal}. It turns out that there is a curious dichotomy here:

\begin{thm}[The Mortality Theorem] \label{thm:mortality} Suppose that $m$ appears at least twice on the frontier, say as $\msG(n_\infty)=\msG(n'_\infty)=m$ with $n<n'$. Then if $a>2n'$, we have $\msG(a_k)\neq m$ for all $k$. Thus the value $m$ dies out after row $2n'$. \end{thm}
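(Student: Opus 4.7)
The plan is to show directly that, for every $k$, some option of $a_k$ has Grundy value $m$, which forces $\mcG(a_k)\ne m$ from the definition of mex. The two frontier witnesses $n<n'$ supply two candidate backup moves from $a_k$ whose targets have Grundy value $m$, and the fact that there are \emph{two} of them is what guarantees that at least one avoids the forbidden removal size $k$. Note that unlike the proof of the Final Frontier Theorem, we cannot just appeal to Lemma~\ref{lem:twoposs}, because the exceptional case of that lemma could still allow $\mcG(a_k)=m$; so we attack the mex condition head-on.

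Concretely, I would single out the two removals $j_1:=a-n$ and $j_2:=a-n'$. The hypothesis $a>2n'$ combined with $n<n'$ gives $1\le j_2<j_1\le a$, so both are valid removal sizes on a pile of $a$ stones (ignoring the memory constraint). Their targets are $n_{j_1}$ and $n'_{j_2}$, and the estimates $j_1>n$ (which uses $a>2n$, a consequence of $a>2n'$ together with $n<n'$) and $j_2>n'$ (which is the main hypothesis) show that both targets are frontier positions in the sense of the paper: $n_{j_1}=n_\infty$ and $n'_{j_2}=n'_\infty$, both of Grundy value $m$ by hypothesis.

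Since $j_1\ne j_2$, at least one of them differs from $k$, and so at least one of the two removals is a legal option of $a_k$. Its target has Grundy value $m$, which places $m$ in the Grundy set of options of $a_k$. By the mex definition, $\mcG(a_k)\ne m$, as required.

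The only delicate point is the case where $k$ happens to coincide with one of $j_1,j_2$, and handling it is precisely why two frontier witnesses are required: a single witness would supply only one backup move, which $k$ could block, whereas two witnesses guarantee that an unblocked backup always exists. I do not anticipate any other obstacle; the entire argument sits inside one layer of mex computation and does not require induction on $a$.
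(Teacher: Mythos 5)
Your proposal is correct and is essentially the paper's own argument: both single out the two removals $a-n$ and $a-n'$ leading to $n_\infty$ and $n'_\infty$, and use the fact that $k$ can block at most one of them to place $m$ among the excludants of $a_k$. You merely spell out a few details the paper leaves implicit (that $a-n>n$ and $a-n'>n'$ so the targets really are frontier positions), so nothing further is needed.
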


\begin{proof} If $a>2n'$, then from $a_\infty$, there are moves to both $n_{a-n}=n_\infty$ and to $n'_{a-n'}=n'_\infty$. From $a_k$, at least one of these is a legal move: the legal moves are to $(a-i)_i$ for $i\neq k$, and $k$ cannot be equal to both $n$ and $n'$ simultaneously. Thus $m$ is an excludant for $a_k$, so $\msG(a_k)\neq m$. \end{proof}

\begin{ex} Let $m=11$. The first frontier value for $m$ is $n=20$, so that $\msG(20_\infty)=11$. There is also a second frontier value of 11, namely $\msG(21_\infty)=11$. Thus Theorem~\ref{thm:mortality} implies that 11 never appears as a nim value of $a_k$ for $a>42$. It turns out that there are several additional nim values equal to 11 with $21 < a \leq 42$, namely $\msG(22_2)=\msG(40_{19})=\msG(42_{22})=11$. \end{ex}

The Mortality Theorem allows for the possibility that a number can appear exactly once along the frontier. Indeed, this happens with $m=0$: we have $\msG(0_\infty)=0$, but 0 does not occur again along the frontier. When a number occurs only once on a frontier, then it does \emph{not} die out at any point. Indeed, there are arbitrarily large values of $n$ for which $\msG(n_n)=0$.

It turns out that whenever a nim value $m$ occurs exactly once along the frontier, it exhibits a very similar pattern to that of~$0$: for sufficiently large~$n$, we have $\msG(n_k) = m$ only on the diagonal $n = t + k$, where $t$ is the unique integer satisfying $\msG(t_\infty) = m$. Moreover, the values of $k$ for which $\msG((t+k)_k) = m$ can by characterized in terms of the dyadic valuation of~$k$. The following Theorem makes this observation precise. (The next occurrence of this phenomenon after $m = 0$ is $m = 12$, discussed in more detail below.)

\begin{thm}
\label{thm:diagonal}
Suppose that nim value $m$ occurs exactly once on the frontier, say ${\msG(t_\infty) = m}$. Then for all $n > 2t$ with $\msG(n_k) = m$, we necessarily have $n = t + k$. Moreover, for all $k > 3t$,
\[
\msG((t+k)_k) = m \quad\text{if and only if}\quad
\begin{cases}
\text{$k$ is odd; or} \\
\text{$k$ is even and } \msG((t+k/2)_{k/2}) \neq m.
\end{cases}
\]
%there is a finite set $S \subset \{1,2,\ldots,3t\}$ (a \textbf{signature} for~$m$) such that for all $k > 3t$,
%\begin{enumerate}
%    \item[(a)] If $k = a \cdot 2^e$ for some $a \in S$ and $e \in \NN$, then $\msG((t+k)_k) = m$ iff $v_2(k)$ is odd.
%    \item[(b)] Otherwise, $\msG((t+k)_k) = m$ iff $v_2(k)$ is even.
%\end{enumerate}
\end{thm}

\begin{proof}
First suppose $n > 2t$. Then for all $k \neq n-t$, there is a move from $n_k$ to $t_{n-t} = t_\infty$. Therefore $m = \msG(t_\infty)$ is an excludant~of $n_k$, so necessarily $\msG(n_k) \neq m$. This proves the first assertion.

For the second part of the Theorem, suppose $k > 3t$ and consider the options of $(t+k)_k$. Certainly $(t+k)_k$ has moves to $0_\infty$, $1_\infty$, $\ldots$, $(t-1)_\infty$, and therefore, by Theorem~\ref{thm:increasing}, each of $0$, $1$, $\ldots$, $m-1$ is an excludant. So $\msG((t+k)_k) = m$ if and only if $m$ is not {\it also} an excludant of $(t+k)_k$.

Now if $k$ is even, then $(t+k)_k$ has exactly one option that remains on the $n = t+k$ diagonal: $(t+k/2)_{k/2}$. If $k$ is odd, then every option falls outside the diagonal. So consider any option $a_{t+k-a}$ that is not on the diagonal. If $a < t+k-a$, then $a_{t+k-a} = a_\infty$, and hence $\msG(a_{t+k-a}) \neq m$. Otherwise, $2a \geq t + k > 4t$, so that $a > 2t$. By the first part of the Theorem, we also have $\msG(a_{t+k-a}) \neq m$. This shows that there are no options of value $m$ outside the diagonal, which in turn proves the Theorem.
%\[
%\msG((t+k)_k) = m \quad\text{if and only if}\quad
%\begin{cases}
%\text{$k$ is odd; or} \\
%\text{$k$ is even and } \msG((t+k/2)_{k/2}) \neq m.
%\end{cases}
%\]
%But this in turn proves the Theorem, with the set $S$ given by
%\[
%S = \{k \in {]3t/2,3t]} : k \text{ is exceptional}\},
%\]
%where
%\[
%k \text{ is \textbf{exceptional} iff either }
%\begin{cases}
%\msG((t+k)_k) = m \text{ and } v_2(k) \text{ is even; or } \\
%\msG((t+k)_k) \neq m \text{ and } v_2(k) \text{ is odd.}
%\end{cases}
%\]
%(The exceptional values of $k$ on the interval $]3t/2,3t]$ establish a ``seed'' pattern that projects outward indefinitely.)
\end{proof}

Now suppose $k > 3t$ in Theorem~\ref{thm:diagonal}, and write $k = b \cdot 2^e$, with $b$ odd. If $b > 3t$, then Theorem~\ref{thm:diagonal} shows that
\begin{equation}
\msG((t+k)_k) = m \quad\text{if and only if}\quad v_2(k) \text{ is even}.
\label{eqn:dyadicvaluationrelation}
\end{equation}
So the relationship between the values $\msG((t+k)_k)$ and the valuations $v_2(k)$ is the same as for the $m = 0$ case, except along finitely many values of~$b$.

Furthermore, if $b \leq 3t$, then there is a unique integer $e$ for which $3t/2 < b \cdot 2^e \leq 3t$. Then if the relationship in equation (\ref{eqn:dyadicvaluationrelation}) holds for $k = b \cdot 2^e$, it also holds for $b \cdot 2^{e'}$, for all $e' \geq e$. If (\ref{eqn:dyadicvaluationrelation}) fails for~$k$, then in fact the converse relationship holds for all $e' \geq e$:
\[
\msG((t+k)_k) = m \quad\text{if and only if}\quad v_2(k) \text{ is odd}.
\]

\begin{ex} 
The first nonzero immortal value is $m = 12$, which (by rote computation) occurs on the frontier for the first time at $22_\infty$, but not at any $n_\infty$ for $22 < n \leq 44$. By Theorem~\ref{thm:diagonal}, we know that for $n > 44$, nim value $12$ occurs only on the diagonal $n = 22 + k$.

In fact, the only occurrences of $m = 12$ outside this diagonal are at $24_1$, $32_5$, and $22_k$ for $k \neq 2,10$. Moreover, by direct computation of the values $(22+k)_k$ for $k \leq 66$, together with Theorem~\ref{thm:diagonal}, we can give a complete characterization of the diagonal. $\msG((22+k)_k) = 12$ if and only if $v_2(k)\equiv 0\pmod{2}$, with the following exceptions: \begin{itemize} \item $k=2^e$, $e\ge 4$, \item $k=3\times 2^e$, $e\ge 0$, \item $k=15\times 2^e$, $e\ge 0$. \end{itemize} For these exceptional cases, $\msG((k+22)_k)=12$ iff $v_2(k)\equiv 1\pmod{2}$.

\end{ex}

A calculation of the values $\msG(n_k)$ for $n \leq 500000$ reveals two additional immortal values: $m = 1270$ and $m = 105161$. As with $m = 0$ and $m = 12$, their diagonals can be characterized by a finite ``signature'': a finite list of exceptional values of~$b$, for which the asymptotic behavior is inverted; and a finite list of exceptional ``early'' values of~$k$ that violate the asymptotic rule.

For $m = 1270$, the relevant diagonal is $(2782+k)_k$, the asymptotic exceptions occur at $b \in X$, where $X = \{3, 19, 27, 45, 143, 477, 2067, 2091\}$, and the only early exceptions are $k = 143$, $286$, $572$, and~$1144$. Therefore, $\msG((2782+k)_k) = 1270$ if and only if, writing $k = b \cdot 2^e$, either:
\begin{itemize}
    \item $b \not\in X$ and $v_2(k) \equiv 0\pmod{2}$; or
    \item $b \in X$, $k \neq 143,286,572,1144$, and $v_2(k) \equiv 1\pmod{2}$; or
    \item $k = 143,286,572,1144$ and $v_2(k) \equiv 0\pmod{2}$.
\end{itemize}

For $m = 105161$, there are $106$ asymptotic exceptions and $143$ early exceptions; we omit the full presentation. See Appendix~\ref{appendix:algorithms} for a description of the algorithm used to calculate these values.

The sequence of immortal values $m = 0,12,1270,105161,\ldots$ appears to be a new integer sequence. It has been entered into the Online Encyclopedia of Integer Sequences as sequence A351630~\cite{SloaneA351630}.

\suppressfloats[t]
\section{Questions}
\label{section:openquestions}
Memgames are mysterious, and we have many more questions than answers. For starters, there is the ``fractal-like'' region below the parabolic envelope in \Mem. Figure~\ref{fig:mem_closeup} shows a small region of the heat map of \Mem, with $55 \leq n \leq 85$ and $1 \leq k \leq 20$.

Tantalizing patterns can be ascertained: particular nim values tend to establish themselves as ``dominant'' within various triangular subregions, carving out overlapping upper triangles of constant value. Each nim value $m$ appears to make its last ``fractal-like'' (below the parabola) appearance at row $n = m(m+2)$, where it is especially dominant: $\msG(m(m+2)_k) = m$ for all $k \leq m + 2$, occupying the entirety of the fractal-like region on row $m(m+2)$. (The rows $n = 7 \cdot 9$ and $n = 8 \cdot 10$ are visible in Figure~\ref{fig:mem_closeup}, along with other ``dominated'' rows that do not follow the same pattern.) We do not yet have proofs, but the curious reader will be led to discover these patterns, and many more, buried within the fine structure of \Mem.

\begin{figure}[h]
    \centering
    \includegraphics{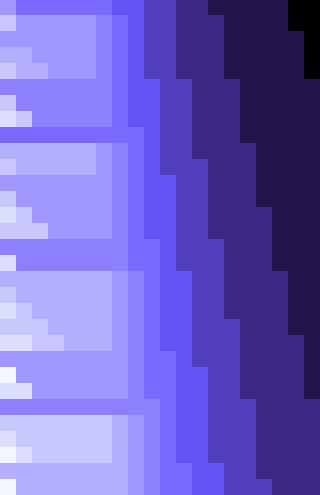}
    \caption{Closeup view of the nim values of \Mem, in the region $55 \leq n \leq 85$, $1 \leq k \leq 20$.}
    \label{fig:mem_closeup}
\end{figure}

Conway's original game \textsc{Mnem}, which motivated the study of \Mem, is still more mysterious. From a \textsc{Mnem} heap~$n_k$, it is legal to remove $j$ tokens for $j \geq k$, just as in \Mem; but it is also legal to {\it add} $j$ tokens for any $j < k$, so that
\[
n_k = \{(n-j)_j : k \leq j \leq n\} \cup \{(n+j)_j : 1 \leq j < k\},
\]
and only the position $0_1$ is terminal.
Therefore \textsc{Mnem} is ostensibly loopy, and there are sequences that cause the heaps to grow unboundedly:
\[
5_1 \to 1_4 \to 4_3 \to 6_2 \to 7_1 \to 1_6 \to 6_5 \to 10_4 \to 13_3 \to 15_2 \to 16_1 \to 1_{15} \to 15_{14} \to \cdots
\]
However, it is conjectured that all nim values are finite, and this conjecture has been verified for $n \leq 1000$. Indeed, the structure of the nim values of \textsc{Mnem} appears similar to \Mem, with the same parabolic envelope and a similar (but curiously, not identical) fractal-like interior. It appears that it is {\it sometimes} necessary to grow a heap in order to win; but with best play no heap will grow unboundedly: either $\mathscr{P}$ or $\mathscr{N}$ can force a win in finite time. Yet even this has not been proved.

Finally, we can ask a host of questions about \Memo.
\begin{itemize}
\item \Memo\ has simple, parameterless rules, yet they give rise to the unusual and mysterious sequence of immortal values $m = 0,12,1270,105161,\ldots.$ What is special about these numbers, or are they merely a combinatorial accident?
\item Are there infinitely many immortal nim values?
\item How many times can a nim value $m$ appear on the frontier? We have found values that appear four times on the frontier, the smallest of which is $m = 871$. We conjecture that there exist nim values that appear arbitrarily many times on the frontier.
\item Are there generalizations of \Memo{} for which we can prove a general theory about frontiers and diagonals, but that exhibit other behaviors, such as diagonals other than $m = 0,12,1270,105161,\ldots$? (For example, one might consider ``perturbed'' variants of \Memo{} in which additional moves are permitted on small heap sizes, but the limiting behavior is the same.)
\item Are there other memgames whose nim values have interesting structure? Specifically, in \cite{LRS09}, the memory is extended to include the $k$ previous moves by the other player, where $k$ is a ruleset parameter, and it is surprisingly demonstrated that the games have the same $\mathscr{P}$/$\mathscr{N}$ structure as games with a certain ``$k$-blocking maneuver.'' In our setting, how do the nim values change if we extend the definition of \Memo\ to allow up to $k-1$ consecutive mimics of the other player's move, but not the $k^{\rm th}$ one? For yet another variation, one may want to study the game where the $k$ previous move sizes (by either player) are not allowed.
\item In \cite{LR18} the memory function of {\sc Fibonacci Nim} is extended to range over all heaps, to a global parameter: Study {\sc Global Mem}. 
\end{itemize}

There is undoubtedly much more to discover in this strange and fascinating landscape.
 
\appendix
\section{An Algorithm for Calculating Values of \Memo \label{appendix:algorithms}}

Evaluating $\msG(n_k)$ in \Memo{} is ostensibly an $O(n^3)$ calculation in $O(n^2)$ space, but with some simple optimizations the running time can be reduced substantially, to $O(n^2\cdot e)$ in $O(n \cdot e)$ space, where $e$ is the typical number of exceptions per row. (An \emph{exception} is a value of $k$ for which $\msG(n_k) \neq \msG(n_\infty)$.)

The algorithm used for the calculations in this paper is given as follows. For each row~$n$, we store the frontier value~$n_\infty$, together with the finite list of exceptions $(k,\ \msG(n_k))$. Then assuming rows $0,\ldots,n-1$ have been computed and stored, we compute row $n$ as follows.
\begin{itemize}
    \item First, iterate over $\msG((n-k)_k)$ for $1 \leq k < n$ and, for each possible nim value $m < n$, tabulate the number of observations of~$m$, say~$\chi(m)$. The smallest $m$ for which $\chi(m) = 0$ is the frontier value $\msG(n_\infty)$. Retain the table $m \mapsto \chi(m)$ for the next step.
    \item Next, iterate once again over $k$ for $1 \leq k < n$. For each~$k$, let $m = \msG((n-k)_k)$. If $m > \msG(n_\infty)$ \emph{or} if $\chi(m) \geq 2$, then necessarily $\msG(n_k) = \msG(n_\infty)$, so no action is necessary. If $m < \msG(n_\infty)$ \emph{and} $\chi(m) = 1$, then necessarily $\msG(n_k) = m$, so add the pair $(k,m)$ to the list of exceptions for row~$n$.
\end{itemize}
By computing the table $m \mapsto \chi(m)$ and retaining it throughout the calculation of row~$n$, we ensure that only a single mex operation is needed per row.
 
\section*{Acknowledgements}

This work was started at Games at Dal, at Dalhousie University, in 2015.

\bibliography{memgames}
\bibliographystyle{halpha}
\end{document}